\documentclass{amsart}
\newtheorem{thm}{Theorem}[section]

\newtheorem{cor}[thm]{Corollary}
\newtheorem{lemma}[thm]{Lemma}
\theoremstyle{cupdefn}

\theoremstyle{cuprem}

\numberwithin{equation}{section}

\begin{document}
\title[The Divisor Function at Consecutive Integers]{On a Problem by Erd\H os and Mirsky on the ratio of the number of divisors of consecutive integers}



\begin{abstract}
Let $\mathcal{L}$ be the closure of the set of all real numbers $\alpha$, such that there exist infinitely many integers $n$, such that $\alpha=\log\frac{d(n+1)}{d(n)}$, where $d$ is the number of divisors of $n$. We give improved lower bounds for the density of $\mathcal{L}$.
\end{abstract}

\maketitle

\section{Introduction and Results}

For an integer $n$ denote by $d(n)$ the number of divisors of $n$. Erd\H os and Mirsky\cite{EM} asked whether there exist infinitely many $n$ such that $d(n)=d(n+1)$. Building on work of Spiro \cite{Spiro}, this problem was solved by Heath-Brown \cite{HB}. More generally Erd\H os and Mirsky conjectured that the set of all real numbers $\alpha$, such that there exist infinitely many integers $n$ such that $\alpha=\log\frac{d(n+1)}{d(n)}$, is dense in $\mathbb{R}$. This conjecture is still open. Let $\mathcal{L}$ be the closure of the set of all $\alpha$ of this form.

Hildebrand \cite{Hi} showed that the Lebesgue measure $\mathcal{L}_+(x)$ of $\mathcal{L}\cap[0, x]$ is at least $\frac{x}{36}$, and the same bound holds for $\mathcal{L}_-(x)=|\mathcal{L}\cap[-x, 0]|$. These results were improved by Hasanalizade \cite{Ha}, who proved the following.

\begin{thm}[Hasanalizade]
Let $x$ be a positive real number.
\begin{enumerate}
\item We have $\min(\mathcal{L}_+(x), \mathcal{L}_-(x))\geq\frac{x}{3}$.
\item There exists an ineffective constant $A$, such that $\min(\mathcal{L}_+(x), \mathcal{L}_-(x))\geq\frac{x-A}{2}$.
\end{enumerate}
\end{thm}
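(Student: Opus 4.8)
The plan is to deduce both bounds from Heath-Brown's theorem \cite{HB} by a multiplicative amplification of the one accessible ratio, namely $1$. Write $E=\{n:d(n)=d(n+1)\}$; Heath-Brown's method shows $E$ is infinite, and in fact produces such $n$ inside prescribed residue classes to any fixed modulus, so we may assume the $n$ we use have controlled factorization at small primes. The key device is that any pair of consecutive integers with equal divisor count can be ``inflated'': given coprime positive integers $a,b$, solve $N\equiv 0\pmod a$ and $N\equiv -1\pmod b$ by the Chinese Remainder Theorem, so that $N=au$ and $N+1=bv$ with $u,v$ running through arithmetic progressions of common differences $b$ and $a$ respectively; whenever $\gcd(a,u)=\gcd(b,v)=1$ this gives
\[
\frac{d(N+1)}{d(N)}=\frac{d(b)}{d(a)}\cdot\frac{d(v)}{d(u)}.
\]
Since $d(b)/d(a)$ already ranges over all positive rationals of bounded height as $a,b$ vary, the task reduces to controlling the ``error ratio'' $d(v)/d(u)$, i.e.\ to an instance of the Erd\H os--Mirsky problem for the divisor function at two \emph{linear} forms (with coprime leading coefficients $b$ and $a$) rather than at $n$ and $n+1$; the case of leading coefficients $(1,1)$ is Heath-Brown's theorem itself.

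Granting that enough accessible cases of this two-linear-forms problem deliver $d(u)=d(v)$ for infinitely many values of the parameter, one obtains a closure property: the attainable set, and hence $\mathcal L$, is invariant under translation by $\log 2$ and, modulo the constraint of staying in $[0,\infty)$, by $-\log 2$ as well. Iterating from $0\in\mathcal L$ then shows that any subset of the fundamental domain $[0,\log 2)$ that we can certify to lie in (the closure of) the attainable values propagates to all of $[0,\infty)$ with the same density, and symmetrically on $(-\infty,0]$. The whole question thus becomes: how much of $[0,\log 2)$ can be filled? For part (1) one exhibits inside $[0,\log 2)$ a sufficiently rich family of attainable values $\log\frac{d(b)}{d(a)}+\log\frac{d(v)}{d(u)}$ — combining the rationals $d(b)/d(a)$ with the range of error ratios available from the accessible cases — and checks that even the crudest such family leaves uncovered at most two thirds of the domain, yielding $\min(\mathcal L_+(x),\mathcal L_-(x))\ge x/3$ effectively.

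For part (2) I would replace the single generator $\log 2$ by several simultaneous translations $\log\frac{a+1}{a}$ for a bounded range of $a$, and use a three-distance/equidistribution estimate to show that the attainable set is syndetic with gaps tending to $0$, forcing the density in a fundamental domain up to $1/2$; the loss of a bounded piece accounts for the term $-A$, and the ineffectivity of $A$ enters either through a non-quantitative dichotomy (some auxiliary family of $n$ is infinite, giving the clean bound past a point, or is finite of non-explicit size) or through an ineffective arithmetic input such as a Siegel-type estimate used to produce suitable primes in progressions with uniform control. I expect the genuine obstacle to be the amplification step itself: keeping \emph{both} $N$ and $N+1$ multiplicatively controlled forces the two-linear-forms version of the problem, and verifying that the available machinery supplies $d(u)=d(v)$ infinitely often for the specific progressions needed, with enough uniformity in $a$ and $b$, is the crux — everything after that is elementary measure bookkeeping.
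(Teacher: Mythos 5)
Your argument has a genuine gap at precisely the point you yourself flag as ``the crux,'' and everything else in the sketch depends on it. The amplification identity $\frac{d(N+1)}{d(N)}=\frac{d(b)}{d(a)}\cdot\frac{d(v)}{d(u)}$ is only useful once you know that $d(u)=d(v)$ holds for infinitely many members of the linked progressions produced by the Chinese Remainder Theorem, with $u$ coprime to $a$ and $v$ coprime to $b$. That is a two-linear-forms analogue of the Erd\H os--Mirsky problem; it does not follow from Heath-Brown's theorem (whose integers $n$ with $d(n)=d(n+1)$ arise from a specific construction and cannot simply be placed in arbitrary residue classes while keeping the cofactors multiplicatively controlled), and you do not prove it --- you ``grant'' it. Without it, neither the claimed translation-invariance of $\mathcal{L}$ by $\log 2$ nor the filling of a fundamental domain is established, so neither bound is obtained. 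The subsequent bookkeeping is also only asserted: no argument is given for why your family ``leaves uncovered at most two thirds'' of $[0,\log 2)$, and in part (2) the appeal to a three-distance estimate and the proposed sources of ineffectivity are not tied to any concrete statement that would actually produce the constant $A$.

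The proof this statement actually rests on takes a different route that sidesteps the amplification problem entirely. The arithmetic input is the Lemma proved by Hasanalizade with the Goldston--Y\i ld\i r\i m--Pintz sieve: for \emph{any} positive integers $a_1,a_2,a_3$ there are indices $i<j$ such that $d(n+1)/d(n)=a_i/a_j$ for infinitely many $n$. One has no control over \emph{which} of the three ratios is attained, but this already forces the complement $\mathcal{N}=\mathbb{R}\setminus\mathcal{L}$ to contain no solution of $\alpha+\beta=\gamma$ (the Corollary in Section 2). Both parts of the theorem are then purely measure-theoretic consequences of this sum-free property of $\mathcal{N}$: the bound $x/3$ by an elementary argument, and $(x-A)/2$ by a deeper density statement for sum-free sets whose constant is where the ineffectivity enters; no further number theory is used. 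If you wish to salvage your approach, the two-linear-forms statement is what must be proved first, and it appears to be at least as hard as the sieve input it would replace.
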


Here we improve these bounds. We show the following.

\begin{thm}
Let $x$ be a positive real number.
\begin{enumerate}
\item We have $\min(\mathcal{L}_+(x), \mathcal{L}_-(x))\geq\frac{x}{2}$.
\item We have
\[
\limsup_{x\rightarrow\infty}\frac{1}{x}\min(\mathcal{L}_+(x), \mathcal{L}_-(x))\geq\frac{2}{3}.
\]
\end{enumerate}
\end{thm}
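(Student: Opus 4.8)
The plan is to follow the strategy underlying the work of Hildebrand and Hasanalizade: first produce, for as rich a family of rationals $\rho$ as possible, infinitely many $n$ with $d(n+1)/d(n)=\rho$, and then convert this family of attained values of $\log\tfrac{d(n+1)}{d(n)}$ into a subset of $[0,x]$ (resp. $[-x,0]$) of large measure by exploiting a scaling symmetry. Concretely, I would organise the attained values around the elementary observation that if $n+1$ carries a ``free'' prime factor, i.e.\ a prime $p\parallel n+1$ coprime to the prescribed part of the factorisation, then $d(n+1)$ is doubled; inserting $j$ such free primes multiplies it by $2^{j}$. The Heath--Brown machinery (via Bombieri--Vinogradov, in the flexible form going back to Spiro \cite{Spiro} and Heath--Brown \cite{HB}) is exactly what produces such $n$ infinitely often: one fixes a ``skeleton'' for $n$ and for $n+1$ on a bounded set of primes, so that the divisor counts of the skeletons are prescribed integers $b$ and $a$, and lets the complementary parts range over integers with a prescribed number of prime factors, the sieve guaranteeing infinitely many admissible $n$. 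This would give $\mathcal{L}\supseteq\overline{\{\log(2^{j}\rho)\,:\,j\ge 0,\ \rho\in\mathcal R_0\}}$, where $\mathcal R_0\subseteq[1,2)$ is the set of ratios $a/b$ obtainable with a single free prime on the $n+1$ side, together with the mirror statement (swapping the roles of $n$ and $n+1$) for $\mathcal{L}_{-}$.

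The arithmetic heart of the argument is then to show that $\overline{\mathcal R_0}$ contains an initial segment $[1,2^{c}]$ of $[1,2)$ with $c$ as large as possible: $c=\tfrac12$ gives part~(1), and $c=\tfrac23$ along a sequence of scales gives part~(2). For a target $\rho=a/b\in[1,2^{c})$ one chooses skeletons whose divisor counts are suitable multiples of $b$ and of $a$ (for instance built from $2^{k}$ times a single extra prime, so that only an exponent pattern must be matched) and applies the sieve; the delicate point is that as $c$ grows one is forced to realise rationals $a/b$ of larger height, hence to let the divisor-carrying skeletons grow, and one must check that the Bombieri--Vinogradov input still yields infinitely many $n$ in the relevant ranges. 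I expect Hasanalizade's ineffective constant $A$ in his part~(2) to come precisely from a possible Siegel zero attached to one of these growing moduli; I would remove it either by restricting to moduli free of exceptional characters, or by a dyadic pigeonhole over the skeletons that trades the single bad modulus for a clean count, thereby obtaining $\min(\mathcal{L}_+(x),\mathcal{L}_-(x))\ge x/2$ for \emph{all} $x$ rather than $\ge(x-A)/2$. The $\limsup$ improvement to $\tfrac23$ should then come from letting one further parameter --- the size of the divisor-carrying cofactor --- tend to infinity slowly along a subsequence of $x$, which fills $[1,2^{2/3})$ densely at those scales: enough for a $\limsup$ statement, though not for every $x$.

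The final combinatorial assembly is soft: once $\overline{\mathcal R_0}\supseteq[1,2^{c}]$ we obtain
\[
\mathcal{L}\cap[0,\infty)\ \supseteq\ \bigcup_{j\ge 0}\bigl[\,j\log 2,\ (j+c)\log 2\,\bigr],
\]
whose intersection with $[0,x]$ has measure at least $c\,x$ for every $x>0$ (a short check at the endpoints shows that no $O(1)$ loss is incurred, since the first half-interval is counted in full), and symmetrically for $\mathcal{L}\cap(-\infty,0]$ via the mirror construction; taking $c=\tfrac12$ yields part~(1) and $c=\tfrac23$ along a subsequence yields part~(2). The main obstacle is squarely the content of the second paragraph: making the Heath--Brown/Bombieri--Vinogradov construction flexible \emph{and} effective enough to realise every ratio in $[1,2^{1/2})$ (and, along a sequence of scales, in $[1,2^{2/3})$) by consecutive integers, since this is where both the quantitative threshold and the removal of the ineffective constant live; the scaling and covering steps are routine by comparison.
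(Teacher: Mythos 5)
There is a genuine gap, and it sits exactly where you locate the ``main obstacle'': the claim that $\overline{\mathcal R_0}$ contains a whole interval $[1,2^{1/2}]$ (let alone $[1,2^{2/3}]$ along a subsequence) of realised ratios $d(n+1)/d(n)$. That claim is essentially the Erd\H os--Mirsky density conjecture restricted to an interval, and no version of the Spiro/Heath-Brown/Bombieri--Vinogradov or GPY machinery is known to produce it: the sieve cannot force a \emph{prescribed} ratio $a/b$ to occur. What it does give (Hasanalizade's lemma, which the paper takes as its only arithmetic input) is much weaker and of a different logical shape: for \emph{any} three positive integers $a_1,a_2,a_3$ there exist $i<j$ such that $a_i/a_j$ is attained infinitely often --- but you do not get to choose which pair. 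Your plan to ``fix a skeleton so that the divisor counts are prescribed integers $b$ and $a$'' and conclude that the ratio $a/b$ itself is attained is precisely the step that nobody can carry out; if you could, part (1) of the theorem would follow with constant $\log 2 \cdot \lceil x/\log 2\rceil / x$ in place of $1/2$, far exceeding what is claimed. Your diagnosis of Hasanalizade's ineffective constant as a Siegel-zero phenomenon is likewise not how the improvement is obtained.

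The paper instead works entirely on the complement $\mathcal N=\mathbb R\setminus\mathcal L$. The three-integer lemma, after an approximation argument using that $\mathcal L$ is closed and that $\log\frac{a_1}{a_2}+\log\frac{a_2}{a_3}=\log\frac{a_1}{a_3}$, shows that $\mathcal N$ is sum-free: there are no $\alpha,\beta,\gamma\in\mathcal N$ with $\alpha+\beta=\gamma$. Part (1) then follows from a pigeonhole argument (if $|\mathcal N\cap[0,x]|>x/2$ one finds $\beta,\alpha-\beta\in\mathcal N$ for every $\alpha$ near $x$, forcing $\alpha\notin\mathcal N$ and allowing the interval to be shortened repeatedly), and part (2) from Macbeath's continuous analogue of Mann's theorem, which converts a lower density $\tfrac13+\delta$ for $\mathcal N$ into lower density $\tfrac23+2\delta$ for $\mathcal N+\mathcal N$, contradicting sum-freeness. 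No individual ratio is ever shown to be attained, and no new sieve input beyond Hasanalizade's lemma is used. If you want to salvage your write-up, the piece to keep is the soft covering computation at the end; the arithmetic core must be replaced by the sum-free/complement argument.
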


\section{Proofs}

The following was proven by Hasanalizade \cite{Ha} using the Goldston-Y\i ld\i r\i m-Pintz-sieve.

\begin{lemma}
Let $a_1, a_2, a_3$ be positive integers. Then there exist indices $1\leq i<j\leq 3$ and infinitely many integers $n$, such that $\frac{d(n+1)}{d(n)}=\frac{a_i}{a_j}$.
\end{lemma}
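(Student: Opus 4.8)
The plan is to produce, for multiplicative data depending on $a_1,a_2,a_3$, an infinite family of $n$ for which both $n$ and $n+1$ have a completely prescribed factorization type, so that $d(n)$ and $d(n+1)$ are known \emph{exactly}. First I would normalize: since $\frac{a_i}{a_j}=\frac{2a_i}{2a_j}$, we may assume $a_1,a_2,a_3$ are even. This is convenient because an even value $v$ is taken by $d$ on a ``dense'' family of integers --- for instance $v=2^{t}\prod_k(e_k+1)$ on integers of the form $p_1\cdots p_t\prod_k q_k^{e_k}$ with the $q_k$ small, the $p_\ell$ large and confined to prescribed dyadic intervals --- whereas odd values of $d$ occur only on the thin set of perfect squares, which a sieve cannot reach. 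Next, fix coprime ``cores'' $A$ and $B$ with $d(A)$ and $d(B)$ prescribed (possible since $d$ is onto the positive integers, and one can moreover take $A$ a power of $2$), and look for $n$ with $A\mid n$, $B\mid n+1$, such that $n/A$ and $(n+1)/B$ are each products of large primes in fixed dyadic ranges. Then $d(n)$ and $d(n+1)$ are pinned down up to a small ambiguity in the number of prime factors of the varying parts, and the resulting short list of possible values of $\frac{d(n+1)}{d(n)}$ can be arranged --- by the choice of $A$, $B$ and the intervals --- to be exactly $\left\{\frac{a_1}{a_2},\frac{a_1}{a_3},\frac{a_2}{a_3}\right\}$; whichever of the three sub-families is infinite proves the statement for that pair $(i,j)$, by pigeonhole.

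The analytic engine is a lower-bound sieve of Goldston-Y\i ld\i r\i m-Pintz type, which is precisely the strength of the result of Hasanalizade cited above. Writing $n=Am$ with $m$ in the arithmetic progression forced by $B\mid Am+1$, the two linear forms detecting $n/A$ and $(n+1)/B$ form an admissible system: there is no local obstruction to both varying parts being simultaneously almost prime of the prescribed shape, because $n$ and $n+1$ are always coprime (the prime $2$ being absorbed into the core). Feeding this configuration into a Richert- or Chen-type weighted sieve yields a quantity tending to infinity that counts the admissible $m\le x$; since each such $m$ gives $\frac{d(n+1)}{d(n)}\in\left\{\frac{a_1}{a_2},\frac{a_1}{a_3},\frac{a_2}{a_3}\right\}$, one of the three values must be attained infinitely often.

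The main obstacle is the parity problem: a sieve detects only an upper bound for the number of prime factors of a form, not its exact value, so one cannot simply demand $d(n)=a_2$, $d(n+1)=a_1$. The construction above is the way around this --- one works only with shapes the sieve can actually see, at the cost that three ratios, not one, survive. The combinatorial heart of the proof is then to check that for an \emph{arbitrary} triple $a_1,a_2,a_3$ the three surviving ratios can be made to equal $\frac{a_1}{a_2},\frac{a_1}{a_3},\frac{a_2}{a_3}$ --- not merely triples whose pairwise ratios differ by powers of $2$, the naive range of squarefree constructions --- which forces a careful choice of the cores and, where needed, of prescribed prime-power exponents inside the varying parts in order to reach divisor values that are not powers of $2$. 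One must additionally verify that the level of distribution and the sieve dimension of the two-form configuration suffice for the weighted sieve to produce a positive, hence unbounded, lower bound, and track the coprimality conditions between the varying primes and the cores $A$ and $B$.
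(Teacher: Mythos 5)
The paper does not actually prove this lemma --- it quotes it from Hasanalizade, whose argument rests on the Goldston--Graham--Pintz--Y\i ld\i r\i m theorem that for a suitable admissible system of \emph{three} linear forms, infinitely many $m$ make at least \emph{two} of the forms simultaneously products of exactly two primes lying in prescribed ranges. There the three forms are in bijection with $a_1,a_2,a_3$ (form $i$ having the exact prescribed shape forces a divisor count proportional to $a_i$), and the unavoidable ``two out of three'' in the sieve's conclusion is precisely the ``there exist $i<j$'' in the lemma. Your sketch has the right flavour --- a GPY-type input, acceptance that one cannot choose which pair wins --- but the mechanism you propose is structurally different and, as written, has a genuine gap.

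The gap is exactly the ``combinatorial heart'' you defer. You fix a single pair of cores $A,B$, sieve the two forms $n/A$ and $(n+1)/B$ to be products of large primes in fixed dyadic ranges, and accept an ambiguity in the number of prime factors of each varying part. But then the set of attainable values of $d(n+1)/d(n)$ is $\{2^{k}\,d(B)/d(A): k\in I\}$ for an interval $I$ of integers (or a geometric progression in another small base if you prescribe exponents): its elements differ multiplicatively by powers of one small integer. The target set $\{a_1/a_2,\ a_1/a_3,\ a_2/a_3\}$ is an arbitrary multiplicative triple and in general cannot be realised this way (try $a_1=a_2=1$, $a_3=3$). So you face a dichotomy: either you demand the \emph{exact} factorisation type of both $n/A$ and $(n+1)/B$, which is a two-form statement of twin-prime strength that no Richert- or Chen-type weighted sieve delivers (those give $P_r$ numbers, i.e.\ upper bounds on the number of prime factors, and the parity problem blocks exactness for a two-form system); or you allow the ambiguity, in which case the pigeonhole only yields that \emph{some} ratio $2^{k}d(B)/d(A)$ with unknown $k$ occurs infinitely often, which does not prove the lemma. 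The repair is to abandon the single pair of cores in favour of three forms, one per $a_i$, and to invoke the specifically GGPY-type $E_2$ result --- exact factorisation types for two forms at once is precisely the parity-breaking content that distinguishes that theorem from the classical weighted sieves you cite.
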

From this result we deduce the following.
\begin{cor}
Put $\mathcal{N}=\mathbb{R}\setminus\mathcal{L}$. Then there do not exist real numbers $\alpha, \beta, \gamma\in\mathcal{N}$ satisfying $\alpha+\beta=\gamma$.
\end{cor}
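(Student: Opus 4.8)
The plan is to argue by contradiction by means of the lemma above. Suppose that $\alpha,\beta,\gamma\in\mathcal{N}$ satisfy $\alpha+\beta=\gamma$, and write $S$ for the set of all reals that occur infinitely often as $\log\frac{d(n+1)}{d(n)}$, so that $\mathcal{L}=\overline{S}$. The goal is to produce three positive integers $a_1,a_2,a_3$ such that \emph{every} one of the three ratios $a_i/a_j$ with $1\leq i<j\leq 3$ has logarithm outside $S$; since the lemma forces $\log(a_i/a_j)\in S$ for at least one of these pairs, this will be the desired contradiction.

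First I would use that $\gamma\notin\mathcal{L}=\overline{S}$ to fix $\varepsilon>0$ with $(\gamma-\varepsilon,\gamma+\varepsilon)\cap S=\emptyset$, and then, using $\alpha,\beta\notin\overline{S}$, choose $0<\delta<\varepsilon/2$ such that both $(\alpha-\delta,\alpha+\delta)$ and $(\beta-\delta,\beta+\delta)$ are disjoint from $S$. The reason for taking $\delta$ so small is that the sumset $(\alpha-\delta,\alpha+\delta)+(\beta-\delta,\beta+\delta)$ equals $(\gamma-2\delta,\gamma+2\delta)$, which is then contained in $(\gamma-\varepsilon,\gamma+\varepsilon)$ and hence also disjoint from $S$.

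Next I would choose rationals $p/q\in(e^{\alpha-\delta},e^{\alpha+\delta})$ and $r/s\in(e^{\beta-\delta},e^{\beta+\delta})$ with $p,q,r,s$ positive integers --- possible since each of these is a nonempty open interval of positive reals --- and set $a_1=pr$, $a_2=qr$, $a_3=qs$. Then $a_1/a_2=p/q$, $a_2/a_3=r/s$, and $a_1/a_3=(p/q)(r/s)$, so that $\log(a_1/a_2)\in(\alpha-\delta,\alpha+\delta)$, $\log(a_2/a_3)\in(\beta-\delta,\beta+\delta)$, and $\log(a_1/a_3)=\log(a_1/a_2)+\log(a_2/a_3)\in(\gamma-2\delta,\gamma+2\delta)$. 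By the previous paragraph none of these three logarithms lies in $S$. On the other hand, the lemma above supplies indices $1\leq i<j\leq 3$ and infinitely many $n$ with $d(n+1)/d(n)=a_i/a_j$, that is, $\log(a_i/a_j)\in S$; as the three ratios $a_i/a_j$ are precisely $a_1/a_2$, $a_2/a_3$, and $a_1/a_3$, this contradicts the preceding sentence.

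I do not anticipate a genuine obstacle; the only point needing care is arranging that all three logarithms avoid $S$ simultaneously, which is why one fixes the interval about $\gamma$ first and then shrinks the intervals about $\alpha$ and $\beta$ until their Minkowski sum fits inside it. It is also worth noting that the argument relies on $\mathcal{N}$ being the complement of a \emph{closed} set: one needs open neighbourhoods of $\alpha$, $\beta$, $\gamma$ that miss $S$, not merely that $\alpha,\beta,\gamma$ themselves lie outside $S$.
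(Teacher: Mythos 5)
Your proof is correct and takes essentially the same approach as the paper's: approximate $\alpha$ and $\beta$ by $\log\frac{a_1}{a_2}$ and $\log\frac{a_2}{a_3}$ within neighbourhoods missing $\mathcal{L}$, so that $\log\frac{a_1}{a_3}$ automatically lands in a neighbourhood of $\gamma$ missing $\mathcal{L}$, contradicting the lemma. Your explicit construction of $a_1,a_2,a_3$ from the rationals $p/q$ and $r/s$ merely spells out a detail the paper leaves implicit.
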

\begin{proof}
Suppose $\alpha, \beta, \gamma$ was a counterexample.
As $\mathcal{L}$ is closed, there exists some $r>0$ such that $[\alpha-r, \alpha+r], [\beta-r, \beta+r], [\gamma-r, \gamma+r]$ are contained in $\mathcal{N}$. Pick positive integers $a_1, a_2, a_3$ such that $|\alpha-\log\frac{a_1}{a_2}|<\frac{r}{2}$, $|\beta-\log\frac{a_2}{a_3}|<\frac{r}{2}$. Then $|\gamma-\log \frac{a_1}{a_3}|<r$, and we obtain a contradiction.
\end{proof}

We can now prove the theorem. We only treat the case of $\mathcal{L}_+$, the case of $\mathcal{L}_-$ runs completely parallel. For a set $\mathcal{A}\subseteq\mathbb{R}$ we write $\mathcal{A}(x)$ for $|\mathcal{A}\cap[0, x]|$, and put $\mathcal{A}+\mathcal{A}=\{a+a', a, a'\in\mathcal{A}\}$.

Suppose there exists a positive real number $x$, such that $|\mathcal{N}\cap[0, x]|=\frac{x}{2}+\delta$ with $\delta>0$, and let $x-2\delta<\alpha\leq x$. Then we have
\[
|\mathcal{N}(\alpha)| = |\mathcal{N}(x)|-|\mathcal{N}\cap[\alpha, x]|\geq \frac{x}{2}+\delta - (x-\alpha)>\frac{\alpha}{2},
\]
and by the pigeon hole principle there exists some $\beta>0$, such that $\beta$ and $\alpha-\beta$ are both in $\mathcal{N}$. By the corollary we obtain that $\alpha\not\in\mathcal{N}$.
Hence, $(x-2\delta, x]\cap\mathcal{N}=\emptyset$. But then
$|\mathcal{N}\cap[0, x-2\delta]|=|\mathcal{N}\cap[0, x]|$, and we can repeat the argument until we find $\mathcal{N}\cap[0, x]=\emptyset$. This clearly contradicts the assumption that $|\mathcal{N}\cap[0, x]|>\frac{x}{2}$, and the first claim is proven.

Next suppose that 
\[
\limsup_{x\rightarrow\infty}\frac{1}{x} \mathcal{L}_+(x)<\frac{2}{3}.
\]
Then there exists some $\delta>0$  such that $f(x)=\mathcal{N}(x)-\left(\frac{1}{3}+\delta\right)x$ tends to infinity. As $f$ is continuous, we deduce that it attains its minimum on $[0, \infty)$ in some point $x_0$. We conclude that for any $x>x_0$ we have
\[
|\mathcal{N}\cap[x_0, x]|=f(x)-f(x_0) + \left(\frac{1}{3}+\delta\right)(x-x_0) \geq  \left(\frac{1}{3}+\delta\right)(x-x_0). 
\]
We can now apply Macbeath theorem \cite{Mac}, which is an analogue of Mann's proof of Schnirelmann's conjecture \cite{Mann}.
\begin{lemma}[Macbeath]
Let $\mathcal{A}\subseteq[0, \infty)$ be a measurable set, and assume that $\mathcal{A}(x)\geq\alpha x$ holds for all $x>0$. Then we have $(\mathcal{A}+\mathcal{A})(x)\geq2\alpha x$ for all $x>0$.
\end{lemma}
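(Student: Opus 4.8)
The plan is to prove the statement as a continuous analogue of Mann's $\alpha+\beta$ theorem, in the spirit flagged by the authors. First I would record the elementary half. Since $2\alpha x\le x$ forces $\alpha\le\tfrac12$, we may assume $\alpha\le\tfrac12$ (if the hypothesis holds with some $\alpha>\tfrac12$ it holds with $\alpha=\tfrac12$, and the asserted inequality is then to be read as $(\mathcal{A}+\mathcal{A})(x)\ge\min(x,2\alpha x)$). The pointwise input is that if $s\notin\mathcal{A}+\mathcal{A}$ then $\mathcal{A}\cap[0,s]$ and $s-(\mathcal{A}\cap[0,s])$ are disjoint subsets of $[0,s]$: a common point $y$ would give $y,s-y\in\mathcal{A}$ and hence $s\in\mathcal{A}+\mathcal{A}$. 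As both sets have measure $\mathcal{A}(s)$, disjointness yields $2\mathcal{A}(s)\le s$. Thus every $s$ with $\mathcal{A}(s)>s/2$ already lies in $\mathcal{A}+\mathcal{A}$, which settles the case $\alpha>\tfrac12$ at once and shows the constant $2\alpha$ is sharp.

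For the quantitative range $\alpha\le\tfrac12$ this reflection estimate is only pointwise and must be upgraded, which is precisely where Mann's theorem enters. I would first reduce to the case that $\mathcal{A}\cap[0,x]$ is a finite union of intervals: by inner regularity one approximates it from within by such a set while retaining a $(1-\varepsilon)$ proportion of the measure of every initial segment $[0,t]$ (possible since $t\mapsto\mathcal{A}(t)$ is continuous), so the density hypothesis survives with $\alpha$ replaced by $(1-\varepsilon)\alpha$; if $\mathcal{A}'\subseteq\mathcal{A}$ then $\mathcal{A}'+\mathcal{A}'\subseteq\mathcal{A}+\mathcal{A}$, so a bound for $\mathcal{A}'$ transfers and $\varepsilon\to0$ recovers the full claim. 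For a finite union of intervals the engine is the Dyson $e$-transform adapted to measure: given measurable $\mathcal{C},\mathcal{D}$ and $e\ge0$, set $\mathcal{C}_1=\mathcal{C}\cup(\mathcal{D}+e)$ and $\mathcal{D}_1=\mathcal{D}\cap(\mathcal{C}-e)$. One checks directly that $\mathcal{C}_1+\mathcal{D}_1\subseteq\mathcal{C}+\mathcal{D}$, and by inclusion--exclusion (using $\mathcal{D}_1+e=(\mathcal{D}+e)\cap\mathcal{C}$) that $\mathcal{C}_1(t)+\mathcal{D}_1(t-e)=\mathcal{C}(t)+\mathcal{D}(t-e)$ for all $t\ge e$. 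Starting from $\mathcal{C}=\mathcal{D}=\mathcal{A}$, so that $\mathcal{C}(t)+\mathcal{D}(t)=2\mathcal{A}(t)\ge2\alpha t$, I would iterate the transform, at each stage choosing $e$ to simplify $\mathcal{D}$ (e.g. to fill a gap of $\mathcal{D}$), driving it after finitely many steps to an initial interval $[0,c)$. On such a terminal configuration the desired bound follows from the retained density information, and since the transform never increases $\mathcal{C}+\mathcal{D}$, the bound propagates back to $\mathcal{A}+\mathcal{A}$.

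The main obstacle is exactly the mismatch between the two density statements: the hypothesis controls $\mathcal{C}(t)+\mathcal{D}(t)$ at a common argument $t$, whereas the transform identity relates the measures at the shifted arguments $t$ and $t-e$. Showing that an adequate density lower bound is preserved along the whole iteration, and that the process terminates at a configuration where the inequality is transparent, is the heart of the matter and the step requiring genuine care; restricting to finite unions of intervals is what makes the iteration finite and the bookkeeping explicit, which is why I would make that reduction first. I expect the delicate point to be the behaviour near the origin — equivalently, the fact that on a common grid the sum of two adjacent cells spreads across \emph{two} cells, which is what compensates for the measure lost by the cell at $0$. An alternative to the continuous transform is to encode the finite-interval problem on such a grid as a finite additive problem and invoke Mann's theorem directly, provided one tracks precisely this two-cell spread.
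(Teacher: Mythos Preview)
The paper does not supply a proof of this lemma; it is quoted from Macbeath's paper and applied as a black box, so there is no in-paper argument to compare your proposal against. Your overall plan (reflection for $\alpha>\tfrac12$, then a Dyson-transform attack for $\alpha\le\tfrac12$) is a reasonable strategy, and the transform identities you record are correct.

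There is, however, a concrete gap in your reduction step. You claim that $\mathcal{A}\cap[0,x]$ can be approximated from within by a finite union of intervals $\mathcal{A}'\subseteq\mathcal{A}$ retaining a $(1-\varepsilon)$-fraction of every initial segment. This is false in general: take $\mathcal{A}=[0,\infty)\setminus\mathbb{Q}$. Then $\mathcal{A}(t)=t\ge\alpha t$ for every $t>0$ and every $\alpha\le1$, yet $\mathcal{A}$ has empty interior, so every finite union of intervals contained in $\mathcal{A}$ has measure zero. Inner regularity yields compact subsets, not interval subsets, and the continuity of $t\mapsto\mathcal{A}(t)$ does not rescue this. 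Since you rely on $\mathcal{A}'\subseteq\mathcal{A}$ to obtain $\mathcal{A}'+\mathcal{A}'\subseteq\mathcal{A}+\mathcal{A}$ and transfer the bound back, switching to an outer approximation is not an option either. The Dyson transform is perfectly well defined on arbitrary measurable sets, so one repair is to drop the reduction entirely and run the transform directly; but then termination after finitely many steps is no longer automatic, and you are back to the shift-mismatch difficulty you already flag as the unresolved ``heart of the matter''. As it stands, the proposal is an outline with the decisive step---either a valid reduction, or a direct argument that the transform iteration preserves an adequate density bound and reaches a terminal configuration---still missing.
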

We apply the lemma to the set $(\mathcal{N}-x_0)\cap[0, \infty)=\{n-x_0:n\in\mathcal{N}, n\geq x_0\}$. We obtain that 
\begin{multline*}
(\mathcal{N}+\mathcal{N})(x) \geq \big((\mathcal{N}\cap[x_0, \infty)) + (\mathcal{N}\cap[x_0, \infty))\big)(x)\\
  = (\big((\mathcal{N}-x_0)\cap[0, \infty)+(\mathcal{N}-x_0)\cap[0, \infty)\big)(x-2x_0)\\
\geq 2(x-x_0)\min_{x_0<t\leq x}\frac{1}{t-x_0}|\mathcal{N}\cap[x_0, t]|\geq\left(\frac{2}{3}+2\delta\right)(x-x_0).
\end{multline*}
By the corollary we have $\mathcal{N}\cap(\mathcal{N}+\mathcal{N})=\emptyset$, hence, 
\[
\mathcal{N}(x)\leq x-(\mathcal{N}+\mathcal{N}(x) \leq \left(\frac{1}{3}-2\delta\right)x+x_0,
\]
which implies that $\mathcal{L}(x)>\frac{2}{3}x$ holds for all $x$ sufficiently large. This contradicts the initial assumption, and the theorem follows.

Both parts of the theorem are sharp in the sense that one cannot deduce better bounds using
 only the fact that $\mathcal{N}$ is sum free. To see this note that 
 $\mathcal{A}=[0, 1)\cup(2, \infty)$ is sum free, and $\mathcal{A}(2)=1$, and 
 $\mathcal{B}=\bigcup_{n\in\mathbb{N}}(n+\frac{1}{3}, n+\frac{2}{3})$ is sum free and 
 satisfies $\mathcal{B}(x)=\frac{x}{3}+\mathcal{O}(1)$.

\end{document}